\documentclass[letterpaper, 10 pt, conference]{ieeeconf}  

\IEEEoverridecommandlockouts                              

\usepackage{amsmath,amsfonts}
\usepackage{algorithm}
\usepackage{algpseudocode}
\usepackage{multirow}
\usepackage{graphicx}
\usepackage{cite}

\DeclareMathOperator*{\minimize}{minimize}
\DeclareMathOperator*{\argmin}{arg\,min}
\newtheorem{theorem}{Theorem}

\newtheorem{lemma}[theorem]{Lemma}

\title{\LARGE \bf
Multi-Agent Motion Planning for Simultaneous Arrival using Time-Reversed Search and Distributed Optimal Control
}

\author{Anja Hellander$^{1}$ and Daniel Axehill$^{1}$
\thanks{*This work was partially supported by the Wallenberg AI, Autonomous Systems and Software Program (WASP) funded by the Knut and Alice Wallenberg Foundation.}
\thanks{A. Hellander and D. Axehill are with the Division of Automatic Control,
        Linköping University, Sweden
        {\tt\small \{anja.hellander, daniel.axehill\}@liu.se}}%
}

\begin{document}

\maketitle
\thispagestyle{empty}
\pagestyle{empty}

\begin{abstract}

In this work we consider the multi-agent motion planning (MAMP) problem with the constraint that agents arrive at their respective goals at the same time. For the special case where all agents are initially at rest we propose a two-step method for finding optimized and kinematically feasible solutions. The first step finds an initial feasible solution by applying a state-of-the-art MAMP algorithm (conflict-based search and safe interval path planning with interval projection) backward. The algorithm is complete, and we provide necessary conditions for when it is also optimal. The second step is an improvement step where a receding-horizon optimal control problem (OCP) is posed and the solution found in the first step is used to warm-start the solver. To improve scalability we propose to solve the OCP in a distributed manner using the nonlinear alternating direction method of multipliers (NADMM).

We evaluate the proposed framework in numerical experiments on a car-like vehicle. The results show that the backward planning algorithm successfully finds feasible and collision-free solutions, and that the improvement step further improves the quality of the solutions. Compared to solving the OCPs in a centralized manner, using nonlinear ADMM reduces the computation time.

\end{abstract}

\section{INTRODUCTION}

The problem of finding feasible and collision-free trajectories for autonomous agents has been well-studied. In this paper we consider the problem of finding locally optimal feasible and collision-free trajectories for multiple agents under the constraint that all agents reach their goal simultaneously.  

\subsection{Related work}

The multi-agent pathfinding problem, where collision-free paths for multiple agents are computed, has been well-studied. Current state-of-the-art methods include the complete and optimal Conflict-Based Search (CBS) \cite{sharon2015conflict, lee2021parallel, boyarski2015icbs} and the incomplete and suboptimal Priority-Based Search (PBS) \cite{ma2019searching}. Both these methods rely on repeatedly solving single-agent pathfinding problems using, e.g., A* search \cite{hart1968formal} or safe interval path planning (SIPP) \cite{phillips2011sipp}.

More recently there has been increased interest in multi-agent motion planning (MAMP), an extension of MAPF that considers kinodynamic constraints on the agents. Examples of such work are \cite{wen2022cl, kottinger2022conflict, yan2024multi, moldagalieva2024db, su2025collaborative}. In \cite{ali2023safe} an extension of SIPP called safe interval path planning with safe intervals (SIPP-IP) is presented that considers that real-life agents cannot instantaneously transition between maximum velocity and standstill.

For some applications it may be desirable that all agents reach their goal simultaneously. This has been considered in \cite{jiang2024multi, su2019multi, yin2025collaborative}. Most work considers fixed-wing UAVs and model the agents as Dubins car. A common approach is then to compute Dubins paths and adjust the arrival time by modifying the velocity and/or the minimum turning radius \cite{zhong2023efficient,gao2014cooperative}. Similarly, \cite{shanmugavel2010co} uses Dubins paths with clothoid arcs. The problem with many of these approaches is that they first compute collision-free paths and then modify the velocity/turning radius without ascertaining that the resulting trajectories are still collision-free. This is, however, done in \cite{babel2019coordinated} which uses a single-agent planner similar to probabilistic roadmaps and performs resampling of the roadmap if it fails to construct collision-free trajectories.

As solving motion-planning problems to (global) optimality is generally intractable for non-holonomic agents in the presence of obstacles many approaches rely on first computing a candidate solution and then improving it. For single-agent planning this is done in, e.g., \cite{bergman2020optimization}. Our previous work \cite{hellander2026optimized} extended the framework in \cite{bergman2020improved} to multiple agents, but as the resulting optimization problem is solved centrally the computational effort and time required scales poorly with the number of agents.

Trajectory optimization for multiple agents can be performed in a distributed manner. For a more complete overview on distributed optimization and its applications to multi-robot systems, see \cite{yang2019survey, halsted2021survey}. In \cite{ferranti2022distributed} and \cite{tran2024parallel}, the nonlinear alternating direction method of multipliers (NADMM) \cite{themelis2020douglas} is used for distributed trajectory optimization. Like in this work, this is done in a receding-horizon fashion. 

\subsection{Contributions}

\begin{itemize}
\item We show that multi-agent motion planning problems under simultaneous arrival constraints where agents initially have zero velocity can be solved using standard MAMP algorithms by applying backwards search. Further, we proved sufficient conditions for when using an optimal MAMP algorithm for the backwards search guarantees that the solution is optimal with respect to the initial simultaneous arrival MAMP problem,
\item We propose a framework for multi-agent motion planning with simultaneous arrival consisting of two steps: a computation step where an initial feasible solution is computed followed by an improvement step  where the initial solution is improved by posing and solving an optimal control problem (OCP) in a receding-horizon manner.
\item We show that NADMM can be applied to solve the OCP in the improvement step in a distributed manner. We show using numerical examples that the computation time scales better with the number of agents than if the OCP is solved in a centralized manner.
\end{itemize}

\section{Preliminaries}

This section presents the MAMP problem with simultaneous arrival constraints as well as a state-of-the-art algorithm for MAMP.

\subsection{Problem formulation}

We consider $K$ agents operating within a workspace $W \subset \mathcal{R}^2$. The (nonlinear) dynamics of each agent $i$ are described by
\begin{equation}
\dot{x}_i(t) = f_i(x_i(t), u_i(t)), \quad x(t_0) = x_{s, i}
\end{equation}
where $x_i \in \mathcal{X}_i \subset \mathcal{R}^{n_i}$ is the state vector, $u_i \in \mathcal{U}_i \subset \mathcal{R}^{m_i}$ is the control input, $t \geq 0$ is the time elapsed, and $x_{s, i}$ is the initial state of the agent. Further, we denote the area occupied by agent $i$ while it is in state $x_i$ by $R_i(x_i) \subset \mathcal{R}^2$ and the state space of each agent by $\mathcal{X}_i = \{x_i \in \mathcal{R}^{n_i} | R_i(x_i) \subset W\}$.

The workspace also contains an obstacle region $O \in W$ that the agents may not collide with. The obstacle space for each agent is then $\mathcal{X}_{i, \text{obs}} = \{x_i \in \mathcal{X}_i | R_i(x) \cap O \neq \emptyset \}$. Agents must therefore satisfy
\begin{equation}
x_i (t) \in \mathcal{X}_{i} \setminus \mathcal{X}_{i, \text{obs}} = \mathcal{X}_{i, \text{free}},
\end{equation}

\noindent where $\mathcal{X}_{i, \text{free}}$ is the free space. Additionally, agents may not collide with each other and must therefore satisfy
\begin{equation}
R_i(x_i(t)) \cap R_j(x_j(t)) = \emptyset \quad  i \neq j.
\end{equation}

Given the initial states $x_{s, i}$ and final states $x_{f, i}$ for each agent, the multi-agent motion-planning problem consists of finding a set of feasible and collision-free trajectories $(x_i(t), u_i(t), t_{f, i})$ such that $x_i(t_0) = x_{s, i}$ and $x_i(t_{f, i}) = x_{f, i}$ for all agents $i$. The MAMP problem with simultaneous arrival requires that $t_{f, 1} = \dots = t_{f, K} = t_f$.

The optimal MAMP problem with simultaneous arrival for $k$ agents can then be posed as

\begin{subequations}
\label{eq:opt-mamp}
\begin{align}
\minimize_{\{x_i(t), u_i(t)\}_{i=1}^{K}, t_f} \quad & J = \sum_{i=1}^{k} J_i(x_i, u_i, t_f) \\
\text{s.t.} \quad \dot{x}_i(t) & = f_i(x_i(t), u_i(t)) \\
x_i(t) & \in \mathcal{X}_{i, \text{free}} \quad i = 1, \dots, K \\
u_i(t) & \in \mathcal{U}_i \\
R_i(x_i(t)) \cap R_j(x_j(t)) & = \emptyset \label{subeq:collision} \quad i \neq j \\ 
\label{subeq:init} x_i(t_0) & = x_{s, i} \\ 
\label{subeq:terminal} x_i(t_f) & = x_{f, i} 
\end{align}
\end{subequations}

\noindent where the cost functions $J_i$ are defined as
\begin{equation}
\label{eq:costl}
J_i = \int_{t_0}^{t_f} l(x_i(t), u_i(t)) \, dt
\end{equation}
\noindent where $l(x(t), u(t)) \geq 0$ is the user-defined running cost. For example, selecting $l(x, u) = 1$ results in a minimum-time problem.

\subsection{Solving the MAMP problem}

A popular MAMP algorithm is CBS. The idea behind CBS is to plan for each agent separately using a single-agent motion planner and to create a search tree where each node corresponds to a set of constraints on the trajectories. When conflicts (collisions) between agents occur new nodes are created with additional constraints to solve them. A general outline of this is shown in Algorithm \ref{alg:cbs}. First, the root node is created without any constraints and a plan consisting of trajectories for each agent is computed (line 5). Next, nodes are iteratively selected from the open list (line 8). The open list is sorted based on the cost of the corresponding solution which ensures optimality. The selected node is checked for conflicts, i.e., collisions between agents. A conflict between a pair of agents $a_i, a_j$ is selected (line 12) and two new nodes are created to which constraints on the trajectories of $a_i$ and $a_j$, respectively, are added so as to solve the conflict (line 15). These constraints are constructed from the conflict using the makeConstraint() function. Depending on implementation, constraints may specify that an agent is not allowed to be in a certain state at a certain time, or is not allowed to apply a certain motion primitive from a certain state at a certain time. The trajectories of the agents are updated so that the newly added constraints are satisfied (line 16) and the new nodes are inserted into the open list. This requires a single-agent motion planner.

A popular choice for single-agent motion planner is SIPP. SIPP uses A* search and motion primitives and is comparably efficient as it searches over time intervals rather than points in time. However, the original SIPP algorithm assumes that agents can always remain at their current state and wait. In real life many systems such as cars cannot instantaneously go from full speed to standstill and plans obtained by SIPP may therefore not be kinodynamically feasible. Instead, the extension SIPP-IP can be used which considers such kinodynamic constraints.

\begin{algorithm}[tb]
\caption{Conflict-based search (CBS)}
\label{alg:cbs}
\begin{algorithmic}[1]
\Procedure{CBS}{}
\State Input: initial state $x_{s, i}$, terminal state $x_{f, i}$ for each agent, set of motion primitives
\State Output: trajectory $(x_i(t), u_i(t))$ and final time $t_{f, i}$ for each agent
\State Root.constraints = $\emptyset$
\State Root.plan = updatePlan(Root, Root.constraints)
\State $Q$.insert(Root)
\While{not $Q$.empty()}
	\State $n = Q$.pop()
	\If{$n$.conflicts() $= 0$}
		\State \Return $n$.plan
	\EndIf
	\State $C = n$.firstConflict()
	\For{agent $i$ involved in $C$}
		\State $n' = n$
		\State $n'\text{.constraints=}$
		\mbox{$\quad n.\text{constraints} \cup \text{makeConstraint}(C, i)$}
		\State $n'$.plan = updatePlan($n'$, $n'$.constraints)
		\State $Q$.insert($n'$)
	\EndFor
\EndWhile

\EndProcedure
\end{algorithmic}
\end{algorithm}

\section{Simultaneous Arrival}\label{sec:sim-arrival}

In this section we first present an algorithm that solves the MAMP problem with simultaneous arrival constraints. Additionally, we show that the algorithm is (resolution) complete and discuss under what conditions on the cost function the algorithm is also (resolution) optimal.

\subsection{An algorithm for MAMP with simultaneous arrival constraints}

Several approaches to MAMP with simultaneous arrival follow the general idea of first computing a trajectory for each agent, fixing the arrival time to the one of the slowest agent, and finally extending the trajectories to the desired duration. The challenge with this approach is that for a general system it is not immediately straightforward how to extend a trajectory to a desired duration, in particular not while avoiding collision with other agents. There is, however, one exception: if all trajectories end with the agent at rest, each agent may safely extend its trajectory to remain at rest for any period of time without causing a collision. 

Under the assumption that all agents are at rest at their \textit{initial states}, we note that any search-based MAMP algorithm can be applied to solve MAMP with simultaneous arrival problems by searching backwards from the goal. In particular, we show how this is done for CBS using SIPP-IP as single-agent planner. This is outlined in Algorithm \ref{alg:sim-mamp}. Given the initial states $x_s$ and final states $x_f$ and a set of feasible motion primitives $\mathcal{P}$, CBS (as described in Algorithm \ref{alg:cbs}) is applied to the backward problem (line 2). In the backward problem, the initial states are $x_f$, the final states are $x_s$, and the available motion primitives are the primitives of $\mathcal{P}$ reversed in time, denoted rev($\mathcal{P}$).

Motion primitives in SIPP-IP are represented as a list of the cells swept through together with the time the cell is first swept through and the duration of the sweep. Each such instance is represented as the tuple $(dx, dy, ftt, swt, isEndCell)$, where $(dx, dy)$ represents the location of the cell relative to the state the primitive is applied in, $ftt$ is the time of first touch, $swt$ is the duration of the sweep, and $isEndCell$ indicates if the cell is still touched at the end of the primitive execution. Denoting the total relative translation of the primitive as $(\delta x, \delta y)$ and the total duration as $t_p$, the corresponding backward instance is
\begin{equation}
(dx-\delta x, dy - \delta y, t_p - ftt-swt, swt, ftt == 0).
\end{equation}

\noindent An example of this is illustrated in Figure \ref{fig:reverse}. Note that this reversal in time does not require any assumptions on the backward primitives actually being possible to execute. The backward primitives are used only for planning purposes to ensure collision-free trajectories, and the primitives in the final plan that will be executed are the original primitives.

\begin{figure}[tb]
\begin{center}
\includegraphics[clip, trim=0.5cm 6cm 1cm 2cm, width=0.45\textwidth]{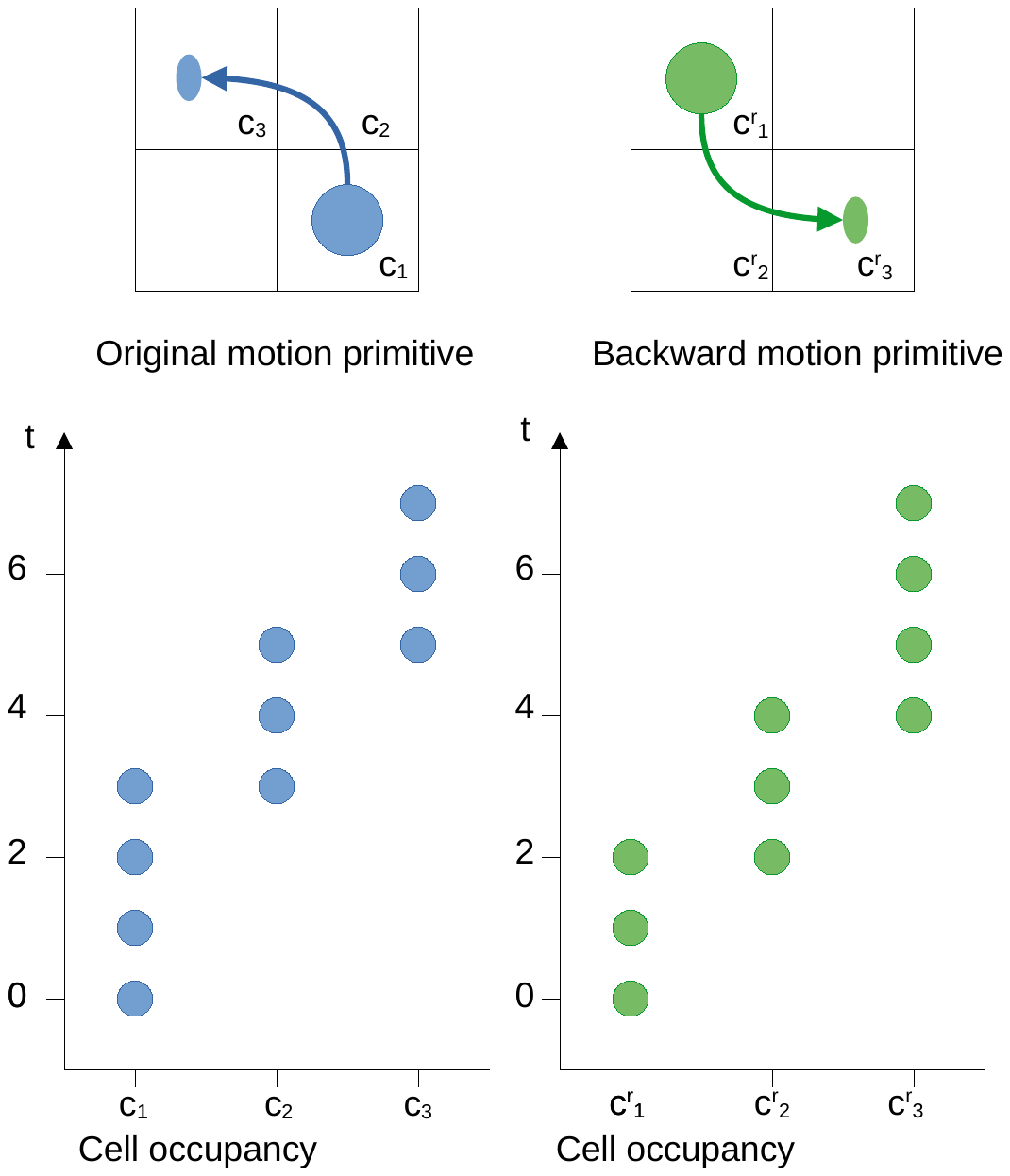} 
\caption{Illustrative example of the reversal of a motion primitive to obtain the backward primitive.} 
\label{fig:reverse}
\end{center}
\end{figure}

When the backward problem has been solved, the time required $t_f$ is then computed as the maximum duration of any individual trajectory (line 3). Finally, the trajectories are reversed in time to obtain a solution to the original problem. To ensure that all agents reach their targets simultaneously, each agent remains at its initial position for a duration of $t_f - t_{f, i}$ (line 5). This corresponds to each agent remaining at its final state in the backward problem for a duration of $t_f - t_{f, i}$, which is guaranteed to not lead to collisions.

\begin{algorithm}[tb]
\caption{MAMP with simultaneous arrival}
\label{alg:sim-mamp}
\begin{algorithmic}[1]
\State Input: initial state $x_{s, i}$ and final state $x_{f, i}$ for each agent $i$ such that $v_{i}(0) = 0$, a set of feasible motion primitives $\mathcal{P}$
\State $\{x_{\text{rev}, i}, t_{f, i}\}_{ki1}^K = $ CBS($x_f$, $x_s$, rev($\mathcal{P}$))
\State $t_{f} = \max_i t_{f, i}$
\For{$i = 1,\dots, K$}
	\State $x_i(t) = x_{\text{rev}, i}(t_{f}-t)$ if $t \geq t_f - t_{f, i}$, else $x_i(t) = x_{s, i}$
\EndFor
\State \Return $\{x_i(t)\}_{i=1}^K$
\end{algorithmic}
\end{algorithm}

\subsection{Algorithm properties}

The properties of the algorithm varies depending on the cost function used and the choice of MAMP planner on line 2 in Algorithm \ref{alg:sim-mamp} that we here assumed to be CBS with SIPP-IP.

\begin{theorem}
If the MAMP algorithm used on line 2 in Algorithm \ref{alg:sim-mamp} is complete, so is the entire Algorithm \ref{alg:sim-mamp}.
\end{theorem}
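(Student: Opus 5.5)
We need to show two things: whenever the original simultaneous-arrival MAMP problem (with the agents at rest at $x_{s,i}$) has a solution representable with primitives in $\mathcal{P}$, Algorithm \ref{alg:sim-mamp} returns one; and everything it returns is a valid solution. The plan is to build a bijection between plans for the forward problem and plans for the backward problem solved on line 2, using time reversal $t \mapsto t_f - t$. Completeness of the backward planner then transfers to Algorithm \ref{alg:sim-mamp}.

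\emph{Step 1: forward solutions give backward solutions.} Suppose $\{x_i(t)\}$ is a feasible, collision-free forward solution with common arrival time $t_f$, built from primitives in $\mathcal{P}$ and possibly waiting at the rest state $x_{s,i}$. Define $x_{\text{rev},i}(t) = x_i(t_f - t)$. Each agent then starts at $x_{f,i}$ and ends at $x_{s,i}$. Each forward primitive becomes the corresponding element of rev($\mathcal{P}$). By construction of the backward instances $(dx-\delta x, dy-\delta y, t_p-ftt-swt, swt, ftt==0)$, the cells swept by agent $i$ at backward time $t$ are exactly those swept at forward time $t_f - t$.

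Obstacle avoidance and the condition $R_i(x_i(t)) \cap R_j(x_j(t)) = \emptyset$ are pointwise in time. They are therefore preserved under a common reparametrization of time, so the reversed plan is collision-free for the backward problem. If an agent waits at $x_{s,i}$ at the start of the forward plan, its reversed trajectory reaches $x_{s,i}$ early and rests there. We can truncate it at its arrival time $t_{f,i}$ and still have a valid backward solution, since in MAPF/MAMP with SIPP-IP an agent that has reached its goal at rest is treated as remaining there. Hence the backward problem is solvable, and by the assumed completeness of the planner on line 2, it returns some solution $\{x_{\text{rev},i}, t_{f,i}\}$.

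\emph{Step 2: backward solutions give valid forward solutions.} Take the returned solution and set $t_f = \max_i t_{f,i}$. Reverse it as on line 5, padding agent $i$ with a wait at $x_{s,i}$ on $[0, t_f - t_{f,i}]$. Then every agent starts at $x_{s,i}$ and reaches $x_{f,i}$ exactly at $t_f$, so arrival is simultaneous.

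The pieces of each trajectory are feasible for two reasons. First, the executed primitives are the original ones in $\mathcal{P}$. Second, the initial wait is dynamically feasible because $v_i(0)=0$ by the input assumption. The reversed primitives need not be executable, since they are only used for planning.

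For collision freedom we rely on the convention that in the backward problem an agent occupies its goal $x_{s,i}$ for all $t \ge t_{f,i}$. So the backward solution returned by CBS is collision-free on $[0, t_f]$ including these resting periods. Applying the same pointwise-in-time argument as in Step 1 shows the padded forward plan is collision-free.

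\emph{Step 3: failure reporting.} If the original problem has no solution, Step 2 shows the backward problem has none either, since any backward solution would produce a forward one. Completeness of the planner then means it reports failure, and so does Algorithm \ref{alg:sim-mamp}.

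I expect the main obstacle to be making precise the claim that waiting at rest at $x_{s,i}$ is collision-free. This needs the goal semantics of CBS/SIPP-IP, namely that agents persist at their goal after arrival and conflicts there are checked. It also needs the reversal formula for the swept cells to be verified so that occupancy is exactly mirrored in time. I would state the persistence convention explicitly as an assumption on the planner, and then check the swept-cell identity $t \mapsto t_p - ftt - swt$ directly on a single primitive.
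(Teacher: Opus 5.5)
Your proposal is correct and follows the same route as the paper: the time-reversal correspondence $x_{\text{rev},i}(t) = x_i(t_f - t)$ maps feasible simultaneous-arrival plans to backward plans and back, with padding by waits at the rest states $x_{s,i}$, so completeness of the backward planner carries over directly. You also make explicit three points the paper leaves implicit: the goal-persistence convention that makes the padded waits collision-free, the truncation of initial waits when going from forward to backward plans, and the check of the swept-cell reversal formula.
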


\begin{proof}
It is clear that if there exists a feasible solution to the backward MAMP problem then performing the steps on lines 4-6 in Algorithm \ref{alg:sim-mamp} results in a feasible solution to the simultaneous arrival MAMP as all agents arrive at their goals at time $t_{f}$ and all trajectories must be collision free as a collision between agents $i$ and $j$ at time $t \in [0, t_{f}]$ would indicate a collision between the same agents at time $t_{f}-t$ in the solution to the backward MAMP. Similarly, it is clear that if $x(t), t \in [0, t_f]$ is a feasible solution to the simultaneous arrival MAMP, then $x_{\text{rev}}(t) = x(t_f-t)$ is a feasible and collision free solution to the backward MAMP problem. Hence, if the algorithm used to solve the backward MAMP is complete, so is Algorithm \ref{alg:sim-mamp}.
\end{proof}

It is of interest to determine under what conditions, if any, the solution computed by Algorithm \ref{alg:sim-mamp} can be guaranteed to be optimal. It will now be shown that optimality of the MAMP algorithm used to solve the backward MAMP problem does not guarantee that the solution is an optimal solution to \eqref{eq:opt-mamp}. 

Suppose that the backward problem is solved using CBS, which is optimal. The principle behind CBS is to iteratively add collision avoidance constraints for one agent and recompute its optimal trajectory, i.e., optimize $J_i$. The optimal trajectory at each such step depends only on these constraints and not on the trajectories of the other agents. Therefore, as constraints are added the feasible set is reduced and hence it is guaranteed that the new individual trajectory will have a cost function value no lower than the previous individual trajectory and the new plan (i.e., set of trajectories for all agents) will have a cost function value no lower than the previous plan. The nodes explored by Algorithm \ref{alg:cbs} will then have non-decreasing cost function values and thus the first conflict-free plan found during the search is an optimal plan.

In the simultaneous MAMP problem, however, the constraint that all agents arrive at the same time might break this guarantee. The cost function for each agent \eqref{eq:costl} depends on $t_f$, which is the duration of the trajectories. This duration is not known when computing the initial trajectories in the backward problem, so the cost function used for agent $i$ must then be 
\begin{equation}
\tilde{J}_i = \int_{0}^{t_{f, i}} l(x_{\text{rev}, i}(t), u_{\text{rev}, i}(t)) \, dt
\end{equation}
\noindent where $t_{f, i}$ denotes the duration of the trajectory without the necessary padding to achieve the same duration of all trajectories. Note that this implies that the cost function $\tilde{J}$ minimized by the backward problem is not necessarily equal to the original cost function $J$. As additional constraints are added for an agent it is clear that $\tilde{J}_i$ cannot decrease, but for the resulting solution to be optimal it must also be guaranteed that the total cost $J$ cannot decrease. This leads us to the following results.

\begin{figure}[tb]
\begin{center}
\includegraphics[clip, trim=2cm 10cm 8cm 3cm, width=0.45\textwidth]{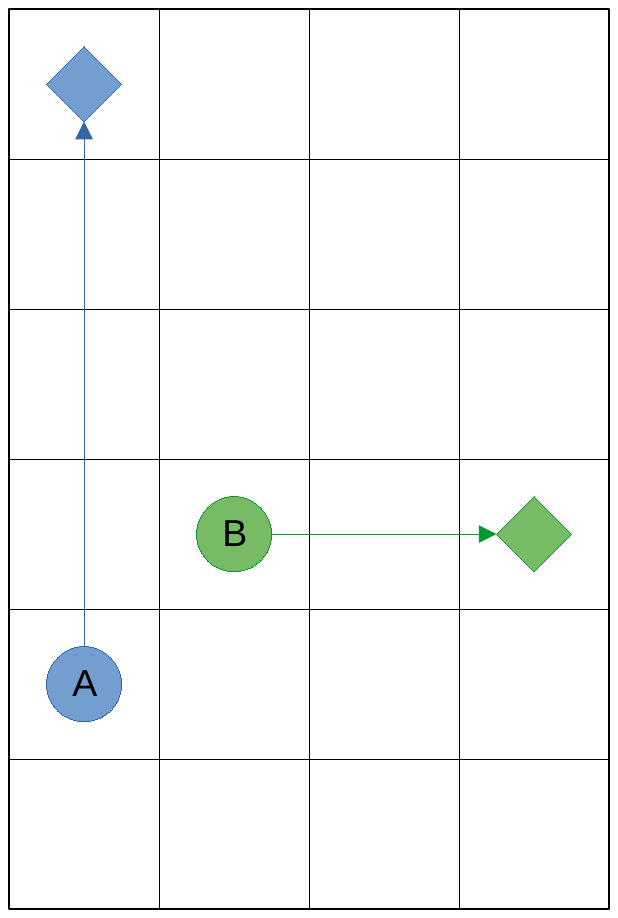} 
\caption{Example problem with two agents illustrating that the algorithm is not optimal for all choices of cost function.} 
\label{fig:example}
\end{center}
\end{figure}

\begin{lemma}
Assume an optimal algorithm is used to solve the backward MAMP problem on line 2 in Algorithm \ref{alg:sim-mamp}. Then the solution from Algorithm \ref{alg:sim-mamp} is in general not an optimal solution to \eqref{eq:opt-mamp}.
\end{lemma}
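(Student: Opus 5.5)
Since the lemma is a negative (``in general not'') statement, I will prove it by exhibiting a single counterexample, namely the two-agent instance sketched in Figure~\ref{fig:example}. The mechanism to exploit is the mismatch between the backward cost $\tilde{J}=\sum_i \tilde{J}_i$ and the true cost $J$ in \eqref{eq:opt-mamp}. They differ by the waiting interval $t_f - t_{f,i}$ that each agent spends at rest at $x_{s,i}$, which contributes $\int l(x_{s,i},0)\,dt$ to $J_i$ but nothing to $\tilde{J}_i$. The running cost I will use is the minimum-time choice $l \equiv 1$. Then $J = K t_f = K\max_i t_{f,i}$, whereas the backward problem minimizes $\tilde J=\sum_i t_{f,i}$ (sum of costs). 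Minimizing a sum and minimizing a maximum generally have different minimizers, so it suffices to build an instance where the optimal backward plan has a larger maximum duration than some other feasible plan.

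The construction will have two agents, 1 and 2, whose individually optimal trajectories conflict. There will be two ways to resolve the conflict.
\begin{itemize}
\item[(a)] Agent 1 detours or yields, so that $(t_{f,1},t_{f,2})=(T+\Delta, T)$.
\item[(b)] Agent 2 yields, so that $(t_{f,1},t_{f,2})=(T, T+\Delta')$.
\end{itemize}
I want the initial segment lengths to be asymmetric. For example, agent 1 has base duration $T_1$ larger than agent 2's base duration $T_2$, the resolutions cost $\Delta_1$ and $\Delta_2$ respectively, and $\Delta_1<\Delta_2$. Then CBS, being optimal for $\tilde J$, returns resolution (a) with $\tilde J = T_1+\Delta_1+T_2$. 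Algorithm~\ref{alg:sim-mamp} pads agent 2, which gives $J = 2(T_1+\Delta_1)$. Resolution (b) instead has $\tilde J = T_1+T_2+\Delta_2$, which is larger, but
\[
J = 2\max(T_1,\,T_2+\Delta_2) = 2T_1 \quad\text{whenever } T_2+\Delta_2\le T_1,
\]
and $2T_1$ is strictly smaller than $2(T_1+\Delta_1)$. A geometric realization is the one in the figure: agent 1 has a long path, agent 2 a short one, and they cross so that someone must wait. I will choose the concrete numbers, e.g.\ $T_1=10$, $T_2=3$, $\Delta_1=1$, $\Delta_2=2$, and check that both resolutions are collision free. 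Because the backward plan is the time reversal of the forward plan, the completeness theorem above shows the two resolutions correspond bijectively between the forward and backward problems, so feasibility transfers. It then follows that an optimal backward solver returns (a), and Algorithm~\ref{alg:sim-mamp} outputs a plan with $J$ strictly above that of (b). This contradicts optimality for \eqref{eq:opt-mamp}.

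I expect the main obstacle to be making the instance rigorous within the discretized setting. I must argue that (a) is actually the unique $\tilde J$-optimal backward plan, i.e.\ that no third resolution does better. I also must argue that the waiting or detour costs $\Delta_1$ and $\Delta_2$ are realizable by the primitive set given the kinodynamic constraints of SIPP-IP. I would first try the simplest route: design the map so that each agent's alternatives are limited to a corridor with a single crossing point, so that the conflict can only be resolved by one agent waiting. The waiting times are then forced by the geometry, and I can enumerate all feasible plans directly. If tie-breaking is an issue, I will keep the inequalities strict, as in the numbers above.
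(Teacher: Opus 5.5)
Your counterexample strategy is sound, but it uses a different mechanism from the paper's. The paper's example does not rely on any inter-agent conflict. Instead it uses a primitive set whose cost is not proportional to duration ($p_1$: two squares, 6 time units, cost 9; $p_2$: one square, 4 time units, cost 5; waiting costs 1 per unit). Agent B's individually cheapest backward trajectory $p_1$ finishes early and then pays 6 units of padding, while the slower, costlier $(p_2,p_2)$ pays only 4, giving $J=33$ versus $J=32$. Nothing about conflict resolution needs checking, because each agent's backward optimum is a single-agent shortest path.

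With $l\equiv 1$ you remove that per-agent trade-off entirely, since $J_i=t_f$ for every agent. You are therefore forced to couple the agents through a conflict whose sum-of-costs-optimal resolution delays the agent that fixes the makespan. This is why you still owe an explicit instance and an enumeration of its resolutions. Two things lighten that work. First, you only need every $\tilde J$-optimal backward plan to have makespan larger than $T_1$, not uniqueness of (a). Second, the kinodynamic worries can be dropped by using an abstract primitive set with a wait primitive, exactly as the paper does, since one instance suffices.

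What your route buys is that it shows failure even for the natural minimum-time cost. But you should state explicitly that it depends on the backward solver minimizing $\sum_i t_{f,i}$. Your instance has $J=Kt_f$, which is precisely the setting of the paper's last theorem, where optimality is restored by taking $\tilde J=\max_i\tilde J_i$. Under that backward objective your instance would be solved optimally. Without this remark your example reads as contradicting that theorem. The paper's example instead targets the nonzero waiting cost that the condition of Theorem~\ref{th:optimality} excludes.
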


\begin{proof}
It is enough to find an example where this holds. Consider the example in Figure \ref{fig:example}. Suppose that there are two primitives available for forward motion: $p_1$ that moves two squares forward and $p_2$ that moves one square forward. Suppose further that the durations are 6 and 4 time units, respectively, that the total cost of executing the primitives are $J(p_1) = 9$ and $J(p_2) = 5$, and that there is also a primitive $p_w$ for remaining at the current square for 1 time unit that has cost $J(p_w) = 1$.
The solution obtained by finding an optimal solution to the backward problem and reversing it will be that agent A should execute $(p_1, p_1)$ and agent B $(p_w, p_w, p_w, p_w, p_w, p_w, p_1)$. This plan in reverse is an optimal solution to the backward problem, with cost $\tilde{J} = 3J(p_1) = 27$, as the cost of the wait primitives is not considered in the backward problem. However, in the original problem the cost is $J = 3J(p_1) + 6 J(p_w) = 33$. An alternative trajectory for agent B with the same arrival time is $(p_w, p_w, p_w, p_w, p_2, p_2)$. For the backward problem, such a plan would result in a total cost $\tilde{J}=2J(p_1)+2J(p_2)=28$ which is suboptimal. However, for the original problem the total cost is then $J = 2J(p_1) + 2 J(p_2) + 4 J(p_w) = 32$. This shows that although the solution the backward problem is optimal, the resulting solution to the original problem is not.
\end{proof}

\begin{theorem}
\label{th:optimality}
The solution $(x(t), u(t), t_f)$ obtained by applying Algorithm \ref{alg:sim-mamp} using an optimal algorithm to solve the backward MAMP problem  on line 2 is optimal if
\begin{equation}
\int_{0}^{t} l(x, \mathbf{0}) dt = 0
\end{equation}
\noindent for all states $x$ and all $t \in [0, t_f]$, i.e., if waiting at the initial state for time $t$ has zero cost.
\end{theorem}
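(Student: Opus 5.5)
The plan is to show that, under the zero-waiting-cost hypothesis, the original cost $J$ of any simultaneous-arrival plan equals the backward cost $\tilde{J}$ of its time reversal once the initial waiting is removed. Optimality of the backward solution then carries over directly.

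First, I will relate the two costs. Take any feasible plan for \eqref{eq:opt-mamp} in which each agent $i$ waits at $x_{s,i}$ on $[0, t_f - t_{f,i}]$ and then executes a motion of duration $t_{f,i}$. Splitting the integral in \eqref{eq:costl} at $t_f - t_{f,i}$, the first piece is $\int_0^{t_f - t_{f,i}} l(x_{s,i}, \mathbf{0})\,dt$, which vanishes by assumption. The second piece, after the substitution $\tau = t_f - t$, is exactly $\tilde{J}_i$ of the reversed trajectory. Hence $J = \sum_i \tilde{J}_i = \tilde{J}$ for every plan of this padded form, and in particular for the output of Algorithm \ref{alg:sim-mamp}.

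Second, I will reduce an arbitrary feasible plan of \eqref{eq:opt-mamp} to a backward plan. Let $(x^*, u^*, t_f^*)$ be an optimal solution of \eqref{eq:opt-mamp}. As in the completeness theorem, $x^*_{\text{rev},i}(t) = x^*_i(t_f^* - t)$ is a feasible, collision-free solution of the backward MAMP problem. For each agent I define $t^*_{f,i}$ as the duration after its initial waiting, so the reversed trajectory ends at $x_{s,i}$ at time $t^*_{f,i}$, after which the agent simply rests. Any rest segments not at the very start contribute nonnegative cost in $J$ because $l \ge 0$, and contribute the same or more in $\tilde{J}$. Therefore $\tilde{J}(x^*_{\text{rev}}) \le J(x^*)$; in fact equality holds if $l(x,\mathbf{0}) = 0$ for all $x$, as the hypothesis states ``for all states $x$.'' Since the backward algorithm is optimal, its solution $\hat{x}_{\text{rev}}$ satisfies $\tilde{J}(\hat{x}_{\text{rev}}) \le \tilde{J}(x^*_{\text{rev}})$. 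Combining this with the first step gives $J(\hat{x}) = \tilde{J}(\hat{x}_{\text{rev}}) \le J(x^*)$, so $\hat{x}$ is optimal.

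The main obstacle is making the second step precise. I have to argue that the backward problem's feasible set, within the discretization over which the backward planner is optimal, contains the reversal of the optimal forward plan, so that the comparison is resolution-optimal. The cleanest route is to treat the optimal forward plan as built from primitives in $\mathcal{P}$ plus waiting, and to set $\tilde{J}$ of a backward plan to be the cost up to the final arrival at $x_{s,i}$. The hypothesis that waiting costs zero for all states then makes the cost of any interior waits irrelevant in both formulations.
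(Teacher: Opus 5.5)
Your proposal is correct and is essentially the paper's proof: you split $J_i$ at $t_f - t_{f,i}$ so the zero-cost initial wait drops out and $J = \tilde{J}$ for the algorithm's output, then time-reverse a competing forward plan into a feasible backward plan and invoke optimality of the backward planner. The only difference is that you argue directly with $\tilde{J}(x^*_{\text{rev}}) \le J(x^*)$, where the paper argues by contradiction with equality. Two small touch-ups: interior rest segments contribute identically to $J$ and $\tilde{J}$ (not ``the same or more,'' which would point the inequality the wrong way), and you should let $(x^*,u^*,t_f^*)$ be an arbitrary feasible plan rather than an optimal one, so the argument does not presuppose that a minimizer exists.
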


\begin{proof}
We have
\begin{equation}
\begin{aligned}
J_i &= \int_{0}^{t_f} l(x_i(t), u_i(t) \, dt = \\
&= \int_{0}^{t_f-t_{f, i}} l(x_{s, i}, \mathbf{0}) \, dt + \int_{t_f-t_{f, i}}^{t_f} l(x_i(t), u_i(t)) \, dt = \\
&=  0 + \int_{0}^{t_{f, i}} l(x_{\text{rev}, i}(t), u_{\text{rev}, i}(t) \, dt = \tilde{J}_i.
\end{aligned}
\end{equation}
Suppose that $(x(t), u(t), t_f)$ is not an optimal solution to the simultaneous arrival MAMP problem. Then there exists another solution $(x^*(t), u^*(t), t_f^*)$ such that $J(x^*(t), u^*(t), t_f^*) < J(x(t), u^*(t), t_f)$. However, then there must exist a corresponding solution to the backward problem $(x^*_{\text{rev}}(t), u^*_{\text{rev}}(t), t_f^*)$ such that $\tilde{J}(x^*_{\text{rev}}(t), u^*_{\text{rev}}(t), t_f^*) = J(x^*(t), u^*(t), t_f^*) < J(x(t), u^*(t), t_f) = \tilde{J}(x_{\text{rev}}(t), u_{\text{rev}}(t), t_f)$ which contradicts that the algorithm used to solve the backward MAMP is optimal. Hence, the solution found by Algorithm \ref{alg:sim-mamp} must be optimal as well.
\end{proof}

We have assumed that $J = \sum_i= J_i$ and $\tilde{J} = \sum_i \tilde{J}_i$. It is also possible to use $\tilde{J} = \max_i \tilde{J}_i$. Doing so makes it clear that the condition in Theorem \ref{th:optimality} is sufficient but not necessary as there is another special case of cost function such that Algorithm \ref{alg:sim-mamp} is optimal. 

\begin{theorem}
If $J = ct_f$ for any constant $c > 0$, then Algorithm \ref{alg:sim-mamp} is optimal.
\end{theorem}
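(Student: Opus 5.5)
The plan is to mirror the contradiction argument in the proof of Theorem \ref{th:optimality}, but to use the makespan objective $\tilde{J} = \max_i \tilde{J}_i$ mentioned just before the statement. First I would identify which running cost makes $J = c t_f$. Taking $l \equiv c$ in \eqref{eq:costl} gives $J_i = c t_f$ for every agent. Aggregating with the maximum then yields $J = c t_f$, while aggregating with the sum yields $J = Kc\,t_f$; both are positive multiples of $t_f$, so minimizing $J$ is the same as minimizing the common arrival time $t_f$. In the backward problem the per-agent cost is $\tilde{J}_i = c\, t_{f,i}$, where $t_{f,i}$ is the unpadded duration of agent $i$'s reversed trajectory. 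So the backward planner, run with $\tilde{J} = \max_i \tilde{J}_i = c \max_i t_{f,i}$, minimizes $c$ times the makespan.

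The key observation is that line 3 of Algorithm \ref{alg:sim-mamp} sets $t_f = \max_i t_{f,i}$. The cost of the returned forward solution is therefore $J = c\,t_f = c\max_i t_{f,i} = \tilde{J}$. This holds even though the padding adds waiting time, because waiting costs exactly what the makespan already accounts for. The equality $J = \tilde{J}$ is the analogue of the identity $J_i = \tilde{J}_i$ in the proof of Theorem \ref{th:optimality}, and it holds without any zero-waiting-cost assumption.

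Next I would run the contradiction. Suppose some feasible simultaneous-arrival solution $(x^*, u^*, t_f^*)$ has $c\,t_f^* < c\,t_f$. By the correspondence established in the completeness theorem, $x^*_{\text{rev}}(t) = x^*(t_f^* - t)$ is a feasible, collision-free solution of the backward problem. Each of its agent durations is at most $t_f^*$: one may truncate each reversed trajectory once agent $i$ reaches $x_{s,i}$, or simply keep all durations equal to $t_f^*$. Hence its backward cost satisfies $\tilde{J}^* \le c\, t_f^* < c\,t_f = \tilde{J}$, which contradicts optimality of the backward planner.

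The main obstacle is the objective-matching step. I must justify that the backward planner optimizes the makespan $\max_i \tilde{J}_i$ rather than the sum of $\tilde{J}_i$. If the sum were used, the argument would fail: a sum-optimal backward plan can have a larger makespan. I would therefore state explicitly that the optimal backward MAMP algorithm is run with $\tilde{J} = \max_i \tilde{J}_i$, which is the setting the remark before the theorem points to. Under that choice CBS remains optimal, since adding constraints can only increase each $t_{f,i}$ and hence the maximum. I would also note the one discretization subtlety: the truncated reversed trajectory must be representable by the planner's primitives. I would handle this as the earlier theorems do, by assuming optimality is meant relative to the primitive set, i.e.\ resolution optimality.
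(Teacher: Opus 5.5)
Your proposal is correct and follows essentially the same route as the paper. Like the paper, you choose $\tilde{J} = \max_i \tilde{J}_i$ with $\tilde{J}_i = c\,t_{f,i}$, note that $J = \tilde{J}$ because line 3 sets $t_f = \max_i t_{f,i}$, and rerun the contradiction argument from the proof of Theorem \ref{th:optimality}. Your extra remarks (CBS stays optimal under the makespan objective, the reversed competitor has durations at most $t_f^*$, and optimality is relative to the primitive set) make explicit details the paper leaves implicit.
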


\begin{proof}
Select $\tilde{J} = \max_i \tilde{J}_i$ where $\tilde{J}_i = ct_{f, i}$. Then $J = \tilde{J}$ and by the same reasoning as in the proof of Theorem \ref{th:optimality} the solution found by Algorithm \ref{alg:sim-mamp} must be optimal.
\end{proof}

\section{Improvement using NADMM}

Consider again the optimal MAMP problem with simultaneous arrival \eqref{eq:opt-mamp}. Applying the approach outlined in Section \ref{sec:sim-arrival} yields a feasible and in general suboptimal solution. We propose to improve this solution, i.e., to try to compute a solution with a better objective function value. In our previous work \cite{hellander2026optimized} this was done by posing \eqref{eq:opt-mamp} as an OCP and solving it using the available feasible solution to warm-start the numerical solver. In that previous approach, a single OCP is posed, jointly optimizing the trajectories of all agents. As this approach scales poorly with the number of agents we will here propose to instead reformulate \eqref{eq:opt-mamp} as a NADMM problem, allowing it to be solved in a distributed manner. We also propose to improve the solution in a receding-horizon fashion where only parts of the trajectories are improved at a time to, e.g., reduce latency.

\subsection{NADMM}

The alternating direction method of multipliers (ADMM) is an algorithm that is well-suited for distributed convex optimization, combining aspects of dual decomposition and augmented Lagrangian methods \cite{boyd2010distributed}. A version of ADMM that can be applied to a larger range of problems is NADMM proposed by \cite{themelis2020douglas}. The NADMM considers problems on the form

\begin{subequations}
\label{eq:nadmm}
\begin{align}
\minimize_{w, v} f(w) + g(v) \\
\text{s.t.} \quad Aw + Bv = b.
\end{align}
\end{subequations}

The problem \eqref{eq:nadmm} is solved by iteratively applying the steps

\begin{equation}
\begin{aligned}
z^{+1/2} &= z - \beta (1 - \lambda) (Aw + Bv - b) \\
w^+ &= \text{arg min } \mathcal{L}_\beta (\cdot, v, z^{+1/2}) \\
z^+ &= z^{+1/2} + \beta (Aw + Bv - b) \\
v^+ &= \text{arg min } \mathcal{L}_\beta (w, \cdot, z^{+1/2}) \\
\end{aligned}
\end{equation}

\noindent where $\beta > 0$ is the penalty parameter, $\lambda \in (0, 2)$ is the relaxation parameter, $z$ is the Lagrange multiplier and 
\begin{equation}
\begin{aligned}
\mathcal{L}_\beta(w, v, z) = f(w) + g(v) &+ \langle z, Aw + Bv - b \rangle
\\ &+ \frac{\beta}{2} \Vert Aw + Bv - b \Vert^2
\end{aligned}
\end{equation}

\noindent is the augmented Lagrangian.

To formulate \eqref{eq:opt-mamp} as a distributed optimization problem we introduce local decision variables for each agent. The global decision variables (corresponding to $v$ in \eqref{eq:opt-mamp}) are $\xi = [x_1^T(t_0:t_f), \dots, x_K^T(t_0:t_f), t_f]^T$. Each agent $i$ has the local decision variables (corresponding to $w$ in \eqref{eq:opt-mamp}) $\tilde{x}_{ij}$, $j=1, \dots K$ denoting the trajectory of agent $j$. That is, in addition to deciding its own trajectory $\tilde{x}_{ii}$ agent $i$ also proposes trajectories $\tilde{x}_{ij}$, $i \neq j$, for the other agents that are collision-free with respect to $\tilde{x}_{ii}$. Agent $i$ also has local variables $\tilde{u}_i$ denoting its own control signals, and $t_{f, i}$ denoting its terminal time. 

Previous work that used NADMM for trajectory optimization \cite{ferranti2022distributed, tran2024parallel} considered a model predictive control (MPC) formulation where the terminal time of the trajectories considered is always fixed. In this work we extend the class of trajectory optimization problems that NADMM can be applied to by including the terminal time $t_f$ as a decision variable, allowing for $t_f$ to be part of the objective function to minimize.

The distributed optimization problem to be solved by each agent $i$ is then formulated as

\begin{subequations}
\label{eq:dist-opt}
\begin{align}
\minimize_{\tilde{x}_{ii}(t), \tilde{u}_{i}(t), t_{f, i}} \quad &  J_i(x_{ii}, u_{i}, t_{f, i}) \\
\text{s.t.} \quad \dot{\tilde{x}}_{ii}(t) & = f_i(\tilde{x}_{ii}(t), \tilde{u}_{i}(t)) \\
\tilde{x}_{ii}(t) & \in \mathcal{X}_{i, \text{free}} \quad i = 1, \dots, k \\
\tilde{u}_{i}(t) & \in \mathcal{U}_i \\
g(\tilde{x}_{ii}(t), \tilde{x}_{ij}(t)) & \leq 0 \\ 
 \tilde{x}_{ii}(t_0) & = x_{s, i} \\ 
 \tilde{x}_{ii}(t_{f, i}) & = x_{i, f} \\
 W_{ij}x_{ij}(t) &= W_{ij}x_j(t), \quad i \neq j \\
 W_t t_{f, i} &= W_t t_f 
\end{align}
\end{subequations}

\noindent where $ g(x_{ii}(t), x_{ij}(t)) \leq 0$ represents the collision avoidance constraint and $J_i$ is as in \eqref{eq:costl}. The last two constraints denote that the values of the local variables should equal the values of the global consensus variables. Following \cite{tran2024parallel}, we weigh these constraints using $W_{ij}$ and $W_t$. These weights do not affect the solution if solving \eqref{eq:dist-opt} directly, but as they appear in the augmented Lagrangian they affect the solution when NADMM is applied. To solve the infinite-dimensional OCP in \eqref{eq:dist-opt}, we apply a direct transcription method. We assume a zero-order hold on the control inputs $u$ and discretize the state trajectories and constraints at N time steps, resulting in a finite-dimensional nonlinear programming problem.

The resulting NADMM algorithm is outlined in Algorithm \ref{alg:admm}. For a given number of iterations $s_{\text{max}}$ each agent in parallel performs the steps on lines 5 to 11. First, they receive the global variable estimates from other agents and update the global variables (lines 5-6). Next, the Lagrangian multiplier and local variables are updated (lines 7-9) which involves minimizing the augmented Lagrangian
\begin{equation}
\mathcal{L}_i = J_i + \langle z, W(\bar{\xi}-\xi) \rangle + \frac{\beta}{2} \Vert W(\bar{\xi}-\xi)\Vert^2.
\end{equation}
\noindent Finally, a global variable estimate is computed and sent to the other agents (lines 10-11).

\begin{algorithm}[tb]
\caption{Nonlinear ADMM for simultaneous MAMP}
\label{alg:admm}
\begin{algorithmic}[1]
\State Given: $\xi^0$
\State $s = 0$
\While{$s \leq s_{\text{max}}$}
	\For{agents $i = 1, \dots, K$ in parallel}
		\State Receive $\hat{\xi}_j$ from neighbours
		\State $\xi^{s+1} = \frac{1}{K} \sum_{j=1}^K \hat{\xi}_j$
		\State $z_i^{s+1/2} = z_i^s + \beta(1-\lambda)(\tilde{\xi}_i^{s} - \xi^{s+1})$
		\State $\begin{bmatrix} \tilde{u}_i^{s+1} \\ \tilde{\xi}_i^{s+1} \end{bmatrix} = \argmin_{\xi, u} \mathcal{L}_i((\xi, u), \xi^{s+1}, z_i^{s+1/2})$
		\State $z_i^{s+1} = z_i^{s+1/2} + \beta (\tilde{\xi}_i^{s+1} - \xi^{s+1})$
		\State $\hat{\xi}_i = \tilde{\xi}_i^{s+1} + \frac{1}{\beta} z_i^{s+1}$
		\State Transmit $\hat{\xi}_i$ to neighbours
		
	\EndFor
	\State $s = s + 1$
\EndWhile
\end{algorithmic}
\end{algorithm}

\subsection{A receding-horizon improvement algorithm}

The improvement of the nominal solution is performed using an iterative receding-horizon approach. The approach is an extension of the approach for single-agent systems presented in \cite{bergman2020optimization}. The principle idea of this approach is that at each iteration $k$ at time $k\delta$, a new trajectory $(\bar{x}^{(k)}(t), \bar{u}^{(k)}(t))$ is computed by solving an OCP over the sliding time window $[k\delta, k\delta + T]$.

The receding-horizon improvement is outlined in Algorithm \ref{alg:improvement}. Inputs to the algorithm are the feasible and collision-free trajectories $\{(x_i(t), u_i(t))\}_{i=1}^{K}$, each of length $t_f$, as well as the user-defined horizon length $T$ and step length $\delta$. At each iteration $k$ the algorithm considers the trajectories during the time interval $[k\delta, k\delta + T_k]$ where $0 < T_k \leq T$. These trajectories are optimized using Algorithm \ref{alg:admm}, with $x_i(k\delta)$ and $x_i(k\delta + T)$ as the initial and terminal states, respectively, for agent $i$ (line 6). This results in the optimized trajectories $(x^*(t), u^*(t))$ of duration $T_k^*$. As in \cite{bergman2020optimization}, the horizon length $T_k$ is a decision variable. Note that the resulting trajectory may therefore have a duration that is shorter than the original duration $T$.

A new candidate solution is then constructed (line 7) by assigning the control signal

\begin{equation}
u_{i, \text{cand}}(t) = 
	\begin{cases}
		\bar{u}_i^{(k)}(t) & t \in [0, k\delta)\\
		u^*(t-k\delta) & t \in [k\delta, k\delta+T_k^*]\\
		\bar{u}_i^{(k)}(t-\Delta T) & t \in (k\delta+T_k^*, t_f-\Delta T]
	\end{cases}
\end{equation}

\noindent where $\Delta T = T - T_k^*$ and applying it to $x_i(0)$ to obtain $x_i(t)$. The time duration of the new solution is $t_{\text{cand}, f} = t_f - \Delta T$ and is the same for all agents. Note that if $\bar{x}^{(k)}$ is feasible and collision-free, and $x^*(t)$, $t \in [k\delta, k\delta+T_k^*]$ is also feasible and collision-free, then this also holds for $\bar{x}^{(k+1)}$. All agents simultaneously reach the position $\bar{x}^{(k)}(k\delta + T)$ at the same time $k\delta + T_k^*$ without collision (since $x^*(t)$ is feasible and collision-free) and then follow the trajectories $\bar{x}^{(k)}(t)$, $t \in [k\delta + T, t_f]$ displaced in time by $\Delta T$ which are also feasible and collision-free. An illustrative example of one receding-horizon iteration is shown in Figure \ref{fig:rh}.

If the candidate solution is feasible and better than the previous solution it is accepted as the new current solution (line 9), otherwise the previous solution is kept (line 11). Thus, it is ensured that the current solution is always feasible and further that
\begin{equation}
J(\bar{x}^{(k+1)}, \bar{u}^{(k+1)}) \leq J(\bar{x}^{(k)}, \bar{u}^{(k)}) \leq \dots \leq J(\bar{x}^{(0)}, \bar{u}^{(0)}).
\end{equation}

\begin{algorithm}[tb]
\caption{Improvement}
\label{alg:improvement}
\begin{algorithmic}[1]
\State Input: feasible and collision-free trajectories $\{(x_i(t), u_i(t))\}_{i=1}^{K}$ of length $t_f$, horizon length $T$, step length $\delta$
\State $t_{\text{curr}} = 0$
\State $\bar{x}_i^{(0)}(t) = x_i(t), \bar{u}_i^{(0)}(t) = u_i(t), \bar{t}_f^{(0)} = t_f$
\State $k = 0$
\While{$t_{\text{curr}} + T \leq \bar{t}_f^{(k)}$}
	\State $(x^*(t), u^*(t), T^*)$ = NADMM($(\bar{x}^{(k)}(t_{\text{curr}}:t_{\text{curr}}+T), \bar{u}^{(k)}(t_{\text{curr}}:t_{\text{curr}}+T)$)
	\State $x_{\text{cand}}, u_{\text{cand}}, t_{\text{cand}, f} =$ computeCandidate()
	\If{feasible and $J(x_{\text{cand}}, u_{\text{cand}}) \leq J(\bar{x}^{(k)}, \bar{u}^{(k)})$}
		\State $\bar{x}^{k+1}, \bar{u}^{k+1}, \bar{t}_f^{k+1} = x_{\text{cand}}, u_{\text{cand}}, t_{\text{cand}, f}$
	\Else
		\State $\bar{x}^{k+1}, \bar{u}^{k+1}, \bar{t}_f^{k+1} = \bar{x}^{k}, \bar{u}^{k}, \bar{t}_f^{k}$
	\EndIf
	\State $k = k + 1$
	\State $t_{\text{curr}} = t_{\text{curr}} + \delta$
\EndWhile
\end{algorithmic}
\end{algorithm}

\begin{figure}[tb]
\begin{center}
\includegraphics[clip, trim=3.5cm 8.5cm 3.5cm 9cm, width=0.45\textwidth]{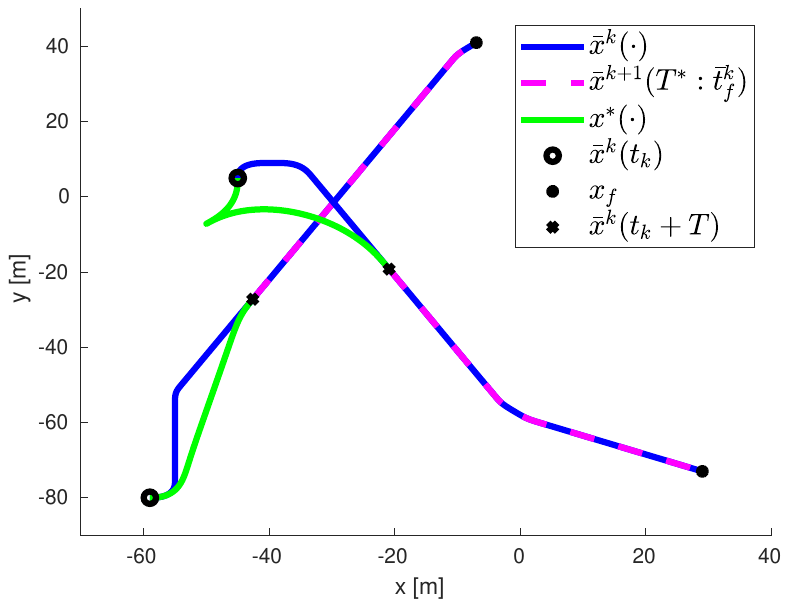} 
\caption{Illustrative example of receding-horizon improvement. Optimization over the horizon has resulted in the green trajectory. Beyond the horizon the previous solution shown in blue is used to guarantee feasibility.}
\label{fig:rh}
\end{center}
\end{figure}

\section{Numerical Experiments}

In this section the proposed planning algorithms are implemented and evaluated in problems with a car-like vehicle. 

The computation of the motion primitives and the improvement step are both implemented in Python using CasADi \cite{andersson2019casadi} with IPOPT \cite{wachter2006implementation} and the ma57 solver from HSL.

\subsection{Vehicle Model}

The car-like vehicle is modelled as a kinematic bicycle \cite{lavalle2006planning} with augmented state $\mathbf{x}(t) = [x(t), y(t), \theta(t), \alpha(t), \omega(t), v(t), a(t)]^T$. Here, $(x, y)$ denotes the position of the car and $\theta$ its orientation. Further, $\alpha$ and $\omega$ are the steering angle and steering angle rate, respectively, of the car and $v$ and $a$ are the longitudinal velocity and acceleration, respectively. The control signals to the system are $\mathbf{u} = [u_\omega, u_a]^T$. The system can then be described by
\begin{equation}
\dot{\mathbf{x}} = f(\mathbf{x}, \mathbf{u}) = 
\begin{pmatrix}
v \cos{\theta} \\
v \sin{\theta} \\
v \frac{\tan{\alpha}}{L} \\
\omega \\
u_\omega \\
a \\
u_a
\end{pmatrix}
\end{equation}

\noindent where $L$ is the wheelbase of the car.

The cost function used is 
\begin{equation}
l(\mathbf{x}, \mathbf{u}) = 1 + \frac{1}{2}(\alpha^2 + 10 \omega^2 + a^2 + \mathbf{u}^T \mathbf{u}).
\end{equation}

\noindent This is the cost function used for primitive generation, by SIPP-IP and by the continuous improvement step. 

The primitive generation is performed as described in \cite{hellander2026optimized} to generate synchronized primitives as required by SIPP-IP. A total of 1376 primitives are generated.

\subsection{Simultaneous arrival}

We randomly generate 100 problem instances for $n = 2, 3, 4, 5$ agents in an obstacle-free $200 \times 200$ map. All problem instances are generated so as to assure no collisions at the initial and terminal positions. For comparison, we implement a baseline algorithm that follows the outline in \cite{shanmugavel2010co} of (i) finding trajectories, (ii) ensuring no collisions, (iii) ensuring the same duration. To find trajectories and ensure that there are no collisions we use CBS with the original motion primitives. To ensure simultaneous arrival, all trajectories are extended to the same duration by padding with waiting at the initial positions. Note that, as in \cite{shanmugavel2010co} this may result in the trajectories no longer being collision-free.

The resulting success rate, i.e., the number of problems where a feasible and collision-free solution was found within a time limit of 100 s is shown in Figure \ref{fig:success_rate}. It can be seen that the proposed algorithm outperforms the baseline algorithm. For two agents the performance is similar, with both algorithms having a success rate of $96 \%$, but as the number of agents increases the difference between the algorithms increases. For five agents the baseline algorithm solves only $58 \%$ whereas the proposed algorithm solves $84 \%$.

The computation times required by the algorithms for the successfully solved problems are shown in Figure \ref{fig:runtime}. It can be seen that the proposed algorithm generally does not perform worse than the baseline algorithm. The only exception is for five agents where the algorithms show a similar median computation time but the proposed algorithm shows a greater variation. It can also be seen that the computation time increases with the number of agents.

\begin{figure}[tb]
\begin{center}
\includegraphics[clip, trim=4cm 8.5cm 3.5cm 9cm, width=0.45\textwidth]{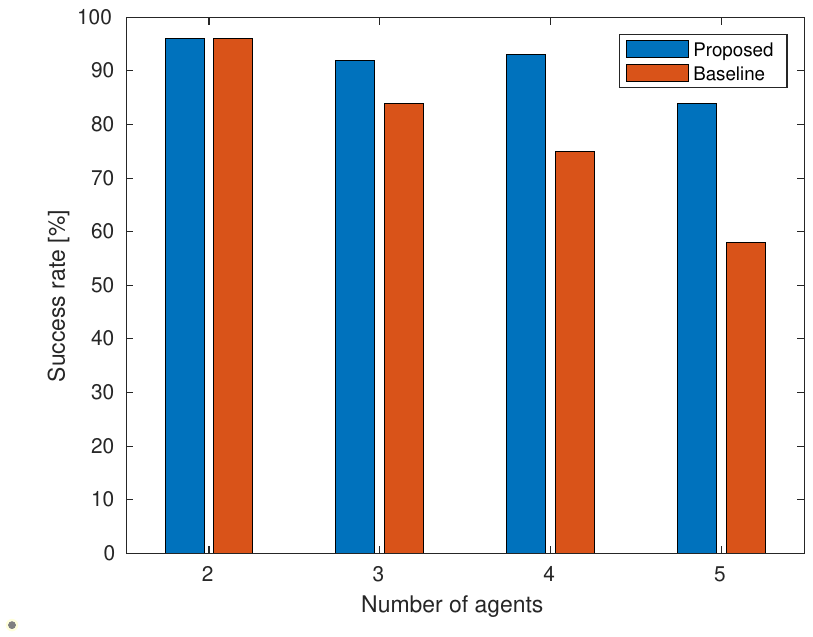} 
\caption{Success rate for the proposed and baseline algorithms.} 
\label{fig:success_rate}
\end{center}
\end{figure}

\begin{figure}[tb]
\begin{center}
\includegraphics[clip, trim=4cm 8.5cm 3.5cm 9cm, width=0.45\textwidth]{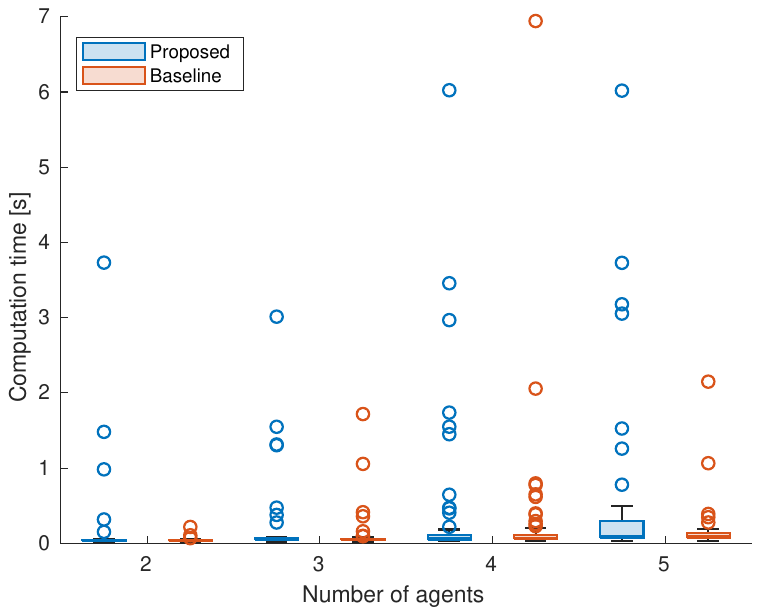} 
\caption{Distribution of computation times for the proposed and baseline algorithms.} 
\label{fig:runtime}
\end{center}
\end{figure}

\subsection{Improvement step}


%

We applied the proposed improvement step to selected problems with $n=2, 3, 4, 5$ agents. The number of iterations used by the NADMM algorithm was set to $s_{\text{max}} = 5$. For comparison we also applied the same receding-horizon algorithm but instead of solving \eqref{eq:dist-opt} as a distributed optimization problem we solve it in a centralized manner where all trajectories are optimized by a single solver. Examples of resulting trajectories are shown in Figure \ref{fig:trajectories}. The resulting cost function improvement for varying horizon lengths is shown in Figure \ref{fig:improvement}. The full duration of the original plan is just over 160 s for $n=2, 4, 5$ agents and just over 140 s for $n=3$ agents. It can be seen that in general the centralized algorithm achieves a larger improvement than the distributed algorithm using NADMM. For shorter horizon lengths ($< 60$ s) the algorithm using NADMM performs similar to the centralized, but for longer horizon lengths the gap increases. It can also be seen that as the horizon becomes longer the additional improvement decreases. Interesting to note is also that when the horizon length reaches the full duration, the improvement achieved by the algorithm using NADMM suddenly decreases drastically. This will be discussed in greater detail later in this section.

To evaluate the two algorithms we also compared their latency times, i.e., the computation time required for the first horizon. The results are shown in Figure \ref{fig:latency}. It can be seen that the general trend is that the latency increases linearly with the horizon length and the number of agents. The distributed algorithm with NADMM scales much better than the centralized algorithm. For two agents the computation times are very similar, but already for three agents the higher performance of the proposed NADMM scheme becomes clear since the time required for three agents by the centralized algorithm is similar to the time required for five agents by the NADMM algorithm. 

The improvement in mission time is shown in Figure \ref{fig:time_reduction}. The trend is similar to that of the cost function improvement in Figure \ref{fig:improvement}. The proposed algorithm that uses NADMM performs worse than the centralized algorithm and the gap increases with the horizon length. When the full horizon is reached, the algorithm using NADMM performs worse than for short horizon lengths. For, e.g., five agents the resulting time improvement is less than the latency time in Figure \ref{fig:latency}. For the centralized algorithm, using the full horizon results in a large increase in time improvement.

That the distributed algorithm performs worse when using the full horizon might seem counterintuitive. In general, a longer horizon should mean a higher degree of freedom and make it possible to achieve greater improvement as in the centralized case. However, the number of NADMM iterations is limited to $s_{\text{max}}$ and the constraints (11h)-(11i) limits the steps taken in $t_f$ in each such iteration. This limits the improvement that is possible each time NADMM is called. For shorter horizon lengths the NADMM algorithm will be called many times until the condition on line 5 in Algorithm \ref{alg:improvement} does not hold anymore so that even if the improvement each time the algorithm is executed is small it will build up over the trajectory since the improved trajectory from the last sample is used as a start. When the horizon $T$ is very close to the full duration the NADMM algorithm will only be called a few times as the condition on line 5 in Algorithm \ref{alg:improvement} stops holding after only a few iterations of the receding-horizon algorithm. This limits the improvement that can be achieved. This can be seen in Figure \ref{fig:iterations}, where the improvement achieved on the problem with five agents and horizon length 160 s for varying number of NADMM iterations is shown. The improvement increases roughly linearly with the number of iterations up until around 65 iterations when it reaches a plateau and achieves similar improvement as the centralized algorithm. It can also be seen that the latency time increases linearly with the number of iterations and that it is always higher than the mission time improvement with the proof-of-concept implementation and hardware available for the experiments.

\begin{figure}[tb]
\begin{center}
\includegraphics[clip, trim=4cm 8.5cm 3.5cm 9cm, width=0.45\textwidth]{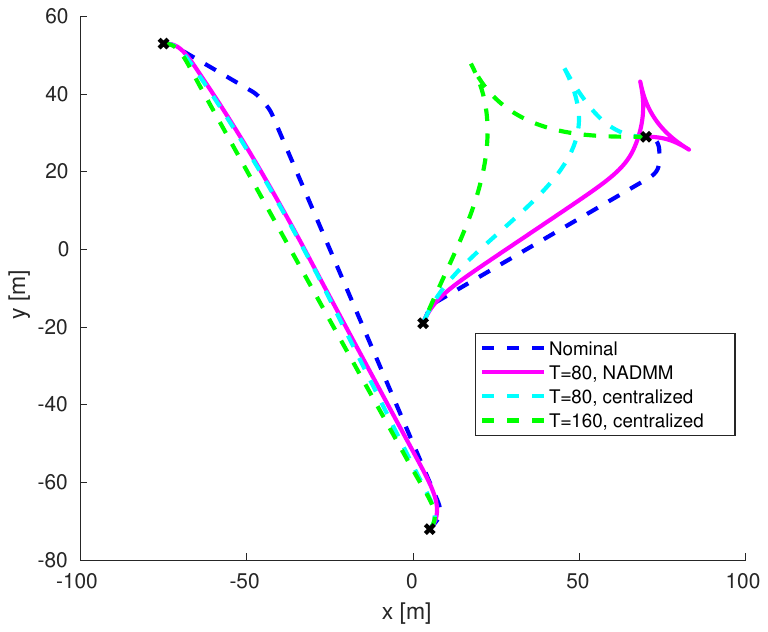} 
\caption{Examples of resulting trajectories after the improvement step.} 
\label{fig:trajectories}
\end{center}
\end{figure}

\begin{figure}[tb]
\begin{center}
\includegraphics[clip, trim=4cm 8.5cm 3.5cm 9cm, width=0.45\textwidth]{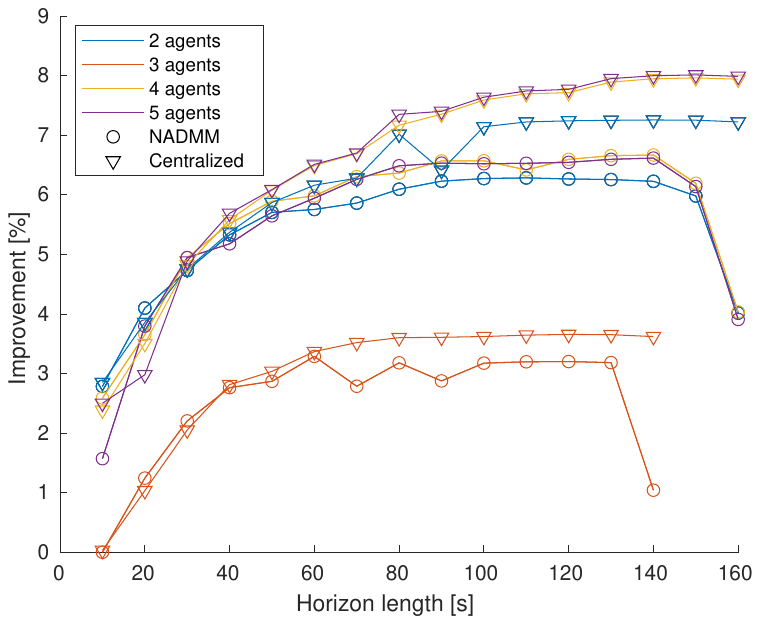} 
\caption{Cost function improvement by the proposed receding-horizon algorithm using NADMM as well as a centralized receding-horizon algorithm.} 
\label{fig:improvement}
\end{center}
\end{figure}

\begin{figure}[tb]
\begin{center}
\includegraphics[clip, trim=4cm 8.5cm 3.5cm 9cm, width=0.45\textwidth]{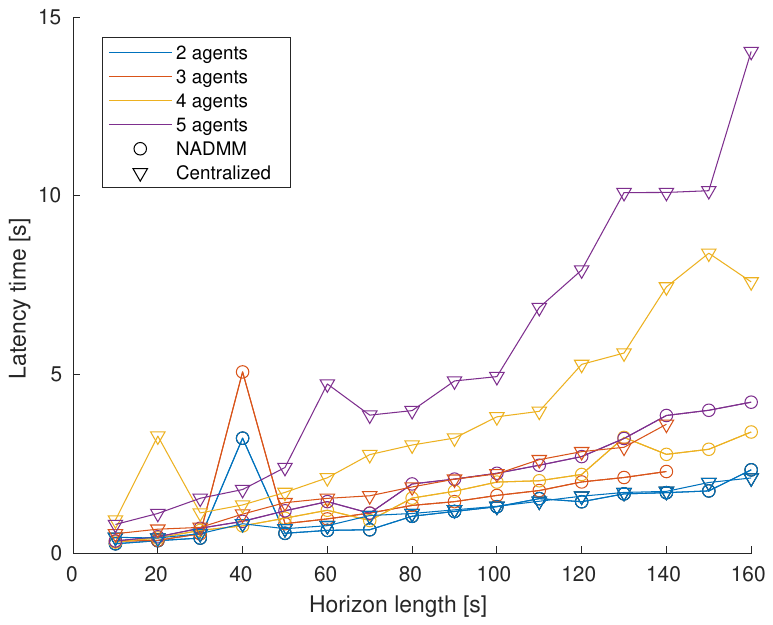} 
\caption{The latency time, i.e., the computation time required for the first horizon.} 
\label{fig:latency}
\end{center}
\end{figure}

\begin{figure}[tb]
\begin{center}
\includegraphics[clip, trim=4cm 8.5cm 3.5cm 9cm, width=0.45\textwidth]{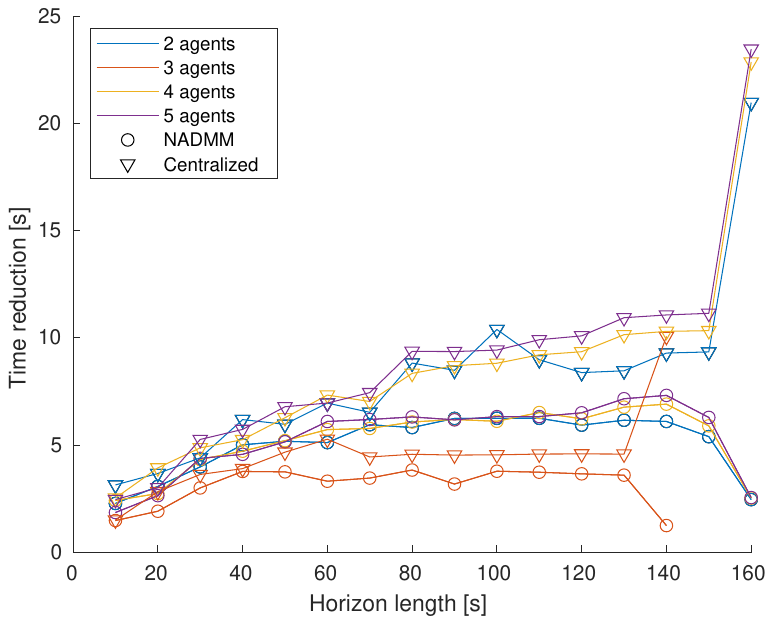} 
\caption{The improvement in mission time, i.e., the improvement in $t_f$.} 
\label{fig:time_reduction}
\end{center}
\end{figure}

\begin{figure}[tb]
\begin{center}
\includegraphics[clip, trim=4.2cm 8.5cm 3.5cm 9cm, width=0.45\textwidth]{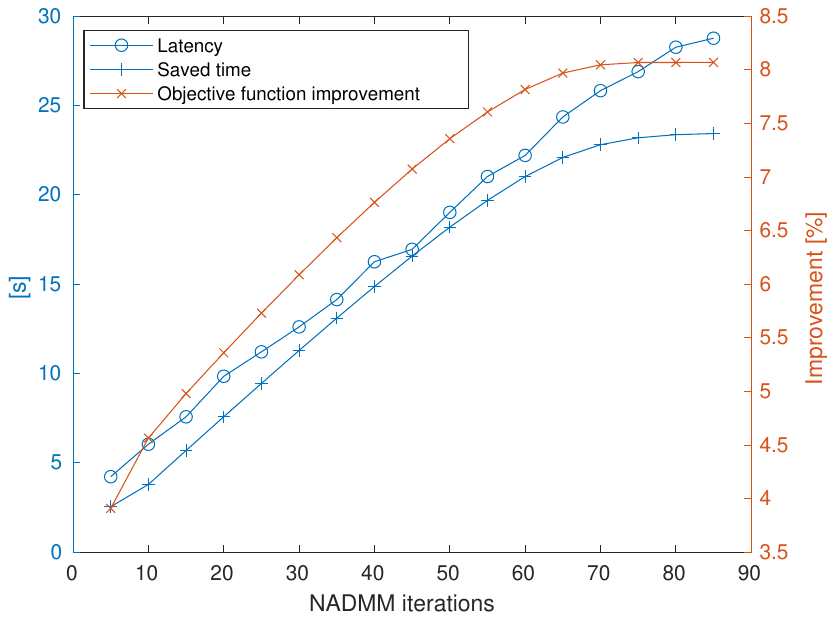} 
\caption{Objective function improvement, latency time and time improvement for varying number of NADMM iterations for the problem with five agents and horizon length 160 s.} 
\label{fig:iterations}
\end{center}
\end{figure}

\section{Conclusions}

In this work we have considered a multi-agent motion planning (MAMP) problem with the additional constraint that all agents reach their goals simultaneously. We have proposed a framework for optimized simultaneous arrival MAMP consisting of two steps. In the first step an initial solution, which is feasible and collision-free, is computed. Under the assumption that the agents are initial at rest we have proposed to use conflict-based search backwards which allows for easy padding of the trajectories to ensure simultaneous arrival. For certain choices of cost function to optimize, such as the arrival time, this provides an optimal solution. 

The second step is an improvement step where a receding-horizon optimal control problem (OCP) is posed and solved. In this step the solution found in the first step is used to warm-start the numerical solver. For the OCP solution to scale better with the number of agents we propose to solve the OCP in a distributed manner using the nonlinear alternating direction method of multipliers (NADMM). Numerical experiments show that the improvement step is able to improve the solutions found in the first step, and that the proposed solution scales better with the number of agents than solving the OCPs in a centralized manner. The results also show that for the proposed solution the performance drops when the full horizon length is used as the improvement by each call to NADMM is limited. For this reason it is better to use a shorter horizon length which allows for many calls to NADMM.

For future work, it is interesting to consider replacing the proof-of-concept implementation with a high-performance one.

\bibliographystyle{ieeetr}
\bibliography{refs_F}

\end{document}